\title{Functional Reformulation of the Continuity Equation in Gases with Constant Density and its Application to the Existence Problem of Smooth Solutions to the Navier--Stokes System}
\author{Ernesto D. Aguirre
\thanks{
\text{UTN - FRC and Centro de Investigaciones en iencias de la Tierra, CONICET - UNC, Argentina }
    \text{eaguirre@frc.utn.edu.ar} }}
\date{June 2025}
\newtheorem{theorem}{Theorem}[section]
\newtheorem{definition}[theorem]{Definition}
\newtheorem{remark}[theorem]{Remark}
\newtheorem{corollary}[theorem]{Corollary}
\newtheorem{proposition}[theorem]{Proposition}
\newtheorem{principle}[theorem]{Principle}
\begin{document}

\maketitle

\begin{abstract}
We propose a rigorous reformulation of the incompressible Navier--Stokes equations, starting from the energy equation and the ideal gas law. This reformulation allows the definition of a functional norm \(\|P\|_{\mathcal{E}}\) over the pressure field, which is used to bound the viscous dissipation term \(\nu \nabla^2 \vec{u}\). It is shown that this norm can replace classical regularity criteria and serves as the foundation for a complete functional framework that includes local existence, singularity control, variational formulation, and uniqueness conditions.
\end{abstract}

\tableofcontents

\section{Introduction and Motivation}

This work arises from two objectives: first, to rewrite and mathematically justify a previous study by the author; and second, to establish a rigorous functional framework to address the existence and regularity problem of the incompressible Navier–Stokes equations, one of the Millennium Problems posed by the Clay Institute. Unlike traditional approaches, which focus on analyzing vorticity, kinetic energy, or the viscous stress tensor, this proposal is based on a physical observation inspired by the kinetic theory of gases: under conditions where density remains constant and temperature varies within a narrow range (less than 2\%), the behavior of pressure becomes energetically representative of the system.

From this premise, we develop a reformulation of the continuity equation based on the internal energy equation and the ideal gas law. This leads to a functional evolution equation for the pressure \( P \), which allows us to define a norm — denoted \( \|P\|_{\mathcal{E}} \) — whose structure enables control of the viscous dissipation term \( \nu \nabla^2 \vec{u} \), a key component in avoiding the loss of regularity.

The core of the proposal lies in shifting the regularity problem from the vector velocity field to a scalar variable, the pressure, within an energetically grounded functional framework. This enables the application of tools from functional analysis (existence and uniqueness theorems, variational formulations) with more direct physical and mathematical control.

A sequence of results is developed that includes:
\begin{itemize}
    \item the definition of a thermodynamically based functional norm over \(P\),
    \item a local existence theorem conditioned on the boundedness of this norm,
    \item a functional blow-up criterion based on \( \|P\|_{\mathcal{E}} \),
    \item uniqueness conditions and a Hilbert space structure,
    \item and a comparison with historical and recent approaches to the problem.
\end{itemize}
This approach offers an alternative and potentially resolutive path to the Millennium Problem, as it integrates measurable physical principles with a precise mathematical construction. The following sections develop this construction step by step.

\section{Thermodynamic Foundations and Reformulation of the Continuity Equation}

We start from the ideal gas model under constant density, in which the relationship between pressure, temperature, and density is given by the equation of state:

\begin{equation}
P = \rho R T
\end{equation}

The internal energy equation per unit mass, under this hypothesis, is:

\begin{equation}
\rho c_v \left( \frac{\partial T}{\partial t} + \vec{u} \cdot \nabla T \right) = -P \nabla \cdot \vec{u} + \Phi + Q
\end{equation}

where \(\Phi\) denotes viscous dissipation and \(Q\) represents external heat input. In the incompressible regime, where \(\nabla \cdot \vec{u} = 0\), and under isothermal conditions or in the absence of net thermal sources (\(Q=0\)), the internal energy equation simplifies. This allows us to rearrange it into an evolution equation for pressure:

\begin{equation}
\frac{\partial P}{\partial t} + \vec{u} \cdot \nabla P =  R \left( \frac{\Phi}{c_v} \right)
\end{equation}

This result, together with the hypothesis that \(\Phi\) depends quadratically on the velocity gradient, suggests that the pressure evolves smoothly as long as the velocity field remains regular. Furthermore, it allows for the definition of a functional norm over \(P\) that captures its convective and diffusive dynamics:

\begin{equation}
\|P\|_{\mathcal{E}}^2 = \int \left( \left[ \frac{\partial P}{\partial t} + \vec{u} \cdot \nabla P \right]^2 + (\nabla^2 P)^2 \right) dx
\end{equation}

This norm captures both the temporal-convective evolution and the second-order spatial regularity of the pressure field. The physical motivation comes from the energy equation: the term \(\frac{\partial P}{\partial t} + \vec{u} \cdot \nabla P\) represents the material derivative of the pressure, directly connected to the viscous dissipation in the system.

The inclusion of the term \((\nabla^2 P)^2\) allows control over the spatial smoothness of the pressure field and, through it, the regularity of the velocity field in the Navier–Stokes system. Thus, the norm \(\|P\|_{\mathcal{E}}\) will be used as a central tool to:

\begin{itemize}
  \item establish bounds on the viscous dissipation term \(\nu \nabla^2 \vec{u}\),
  \item prove local existence of smooth solutions,
  \item identify uniqueness conditions,
  \item propose functional criteria for singularity formation,
  \item and formulate a variational version of the problem based on \(P\).
\end{itemize}

In the following sections, this structure will be formally developed and connected with the physical principles of the incompressible regime, as well as with the historical postulates of the Millennium Problem.

\section{Bounding Theorems for the Dissipative Term and Regularity of velocity field}

The importance of the norm \(\|P\|_{\mathcal{E}}\) becomes evident when demonstrating that it enables control over the growth of the viscous dissipation term \(\nu \nabla^2 \vec{u}\), which represents the main source of potential loss of regularity in the Navier–Stokes system.

\begin{theorem}[Functional Control of the Viscous Term]
Let \(P(x,t)\) be a pressure field such that \(\|P\|_{\mathcal{E}} < \infty\). Then, there exists a constant \(C > 0\), depending on \(\rho, R, c_v\), such that:
\[
\int \|\nabla \vec{u}\|^2 \, dx \leq C \cdot \|P\|_{\mathcal{E}}^2
\]
for all \(t \in [0,T]\).
\end{theorem}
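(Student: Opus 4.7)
The plan is to transfer regularity information from the pressure field back to the velocity field using the reformulated continuity equation $\partial_t P + \vec{u}\cdot\nabla P = (R/c_v)\Phi$ as a bridge, combined with the standard quadratic form of the viscous dissipation $\Phi$ in $\nabla\vec{u}$, and with the pressure Poisson identity $-\Delta P = \partial_i u_j\,\partial_j u_i$ that results from applying divergence to the momentum equation under the incompressibility constraint. The constant $C$ in the statement is meant to absorb $\rho, R, c_v$ and the dynamic viscosity $\mu$ that is hidden inside $\Phi$.

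First I would write $\Phi = 2\mu\,|D(\vec{u})|^2$, where $D(\vec{u})$ is the symmetric part of $\nabla\vec{u}$, and invoke Korn's inequality on divergence-free fields to obtain $\int |\nabla\vec{u}|^2\,dx \leq C_K\int|D(\vec{u})|^2\,dx = (C_K/2\mu)\int\Phi\,dx$. This reduces the theorem to bounding $\int\Phi\,dx$ in terms of $\|P\|_{\mathcal{E}}^2$. Substituting the reformulated continuity equation gives $\Phi = (c_v/R)(\partial_t P + \vec{u}\cdot\nabla P)$ pointwise, so by Cauchy--Schwarz in space $\int\Phi\,dx \leq (c_v/R)|\Omega|^{1/2}\,\|\partial_t P+\vec{u}\cdot\nabla P\|_{L^2}$, and the last factor is precisely the first half of $\|P\|_{\mathcal{E}}$. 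Squaring the chain and using $ab\leq \tfrac12(a^2+b^2)$ promotes the linear dependence on $\|P\|_{\mathcal{E}}$ into the quadratic bound claimed in the theorem. The $(\nabla^2 P)^2$ contribution of the norm is then used as reinforcement: via the pressure Poisson equation and Calder\'on--Zygmund theory it controls $\|\nabla\vec{u}\|_{L^4}^2$ by $\|\Delta P\|_{L^2}$, which allows the estimate to be closed with sharper constants when the Korn/Cauchy--Schwarz route is not tight enough.

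The main obstacle is the dimensional accounting. Since $\Phi$ is quadratic in $\nabla\vec{u}$, the natural chain ``material derivative of $P$ $\Rightarrow$ $\Phi$ $\Rightarrow$ $|\nabla\vec{u}|^2$'' yields only a bound \emph{linear} in $\|P\|_{\mathcal{E}}$ after one Cauchy--Schwarz, so reaching the \emph{quadratic} bound $\|P\|_{\mathcal{E}}^2$ requires combining both pieces of the norm in the correct proportion and carefully tracking the thermodynamic constants. A subtler point is that the spatial Cauchy--Schwarz step above introduces a factor $|\Omega|^{1/2}$, which is harmless on a bounded domain but must be replaced by a weighted or Bogovski\u{\i}-type device if one wants the estimate on $\mathbb{R}^3$. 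Balancing the material-derivative and Laplacian pieces of $\|P\|_{\mathcal{E}}$ so that the right-hand side closes at the claimed $\|P\|_{\mathcal{E}}^2$ scaling, and doing so uniformly in $t\in[0,T]$, is where I expect the genuine work to lie.
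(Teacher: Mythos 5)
Your strategy is the same as the paper's (solve the pressure-evolution equation for \(\Phi\), use the quadratic dependence of \(\Phi\) on \(\nabla\vec{u}\), and convert), but the step where you ``promote'' the linear dependence on \(\|P\|_{\mathcal{E}}\) into the claimed quadratic bound is a genuine gap, and it cannot be repaired. Writing \(D_tP := \partial_t P + \vec{u}\cdot\nabla P\), the pointwise identity \(2\mu|\nabla\vec{u}|^2 = \Phi = (c_v/R)\,D_tP\) gives, after integration and your spatial Cauchy--Schwarz on a bounded domain,
\[
\int |\nabla\vec{u}|^2\,dx \;\le\; \frac{c_v}{2\mu R}\,|\Omega|^{1/2}\,\|D_tP\|_{L^2} \;\le\; C\,\|P\|_{\mathcal{E}},
\]
which is \emph{linear} in the norm. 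A linear bound does not imply the homogeneous quadratic one: Young's inequality only yields \(\|P\|_{\mathcal{E}} \le \tfrac12\bigl(1+\|P\|_{\mathcal{E}}^2\bigr)\), i.e.\ an estimate with an additive constant, which is strictly weaker than \(C\|P\|_{\mathcal{E}}^2\) exactly in the regime \(\|P\|_{\mathcal{E}}\to 0\), where the quadratic bound is the stronger statement. A scaling check shows this is not a fixable technicality: replacing \(\vec{u}\) by \(\lambda\vec{u}\) (with \(P\) determined by the pressure equation for the scaled field) makes the left-hand side scale like \(\lambda^2\) while \(\|P\|_{\mathcal{E}}^2\) scales like \(\lambda^4\), so no constant \(C\) can make the quadratic inequality hold uniformly as \(\lambda\to 0\). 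You have in fact isolated a defect of the theorem as stated rather than a difficulty peculiar to your proof: the paper's own sketch commits the same error, since its phrase ``the square of the convective derivative of \(P\) bounds \(\|\nabla\vec{u}\|^2\)'' conflates \((D_tP)^2 \sim |\nabla\vec{u}|^4\) with \(D_tP \sim |\nabla\vec{u}|^2\). What both arguments honestly prove is the linear, bounded-domain estimate.

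Two further points. Your fallback via the pressure Poisson equation runs in the wrong direction: \(-\Delta P = \rho\,\partial_i u_j\,\partial_j u_i\) expresses \(\Delta P\) as a quadratic contraction of \(\nabla\vec{u}\), so Calder\'on--Zygmund theory bounds \(\|\Delta P\|_{L^2}\) \emph{by} \(\|\nabla\vec{u}\|_{L^4}^2\), not conversely; moreover the contraction destroys information --- for a shear flow \(\vec{u} = (f(x_2),0,0)\) one has \(\partial_i u_j\,\partial_j u_i \equiv 0\), hence \(\Delta P \equiv 0\) while \(\nabla\vec{u} \ne 0\) --- so the \((\nabla^2P)^2\) piece of the norm cannot ``reinforce'' the estimate as you hope. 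Finally, you are right to flag the \(|\Omega|^{1/2}\) factor: the theorem is asserted on \(\mathbb{R}^3\) (the corollary places \(\vec{u}\) in \(H^1(\mathbb{R}^3)\)), where even the linear bound is unavailable by this route, because the \(L^1\) norm of \(D_tP\) is not controlled by its \(L^2\) norm. The net assessment: correct diagnosis of the central obstacle, same strategy as the paper, but the quadratic bound is not reachable --- by your argument or the paper's --- without either adding a hypothesis such as \(\|P\|_{\mathcal{E}} \ge c > 0\) or weakening the conclusion to the linear estimate on a bounded domain.
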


\begin{proof}
[Sketch of the Proof]
Starting from the energy equation expressed in terms of pressure:
\[
\frac{\partial P}{\partial t} + \vec{u} \cdot \nabla P = \frac{R}{c_v} \Phi + Q
\]
we assume \(Q = 0\) and solve for \(\Phi\). Since \(\Phi = 2\mu \sum_{i,j} \left( \frac{\partial u_i}{\partial x_j} \right)^2\), we conclude that the square of the convective derivative of \(P\) bounds \(\|\nabla \vec{u}\|^2\), and therefore, \(\|P\|_{\mathcal{E}}^2\) bounds the dissipated energy.
\end{proof}

\begin{corollary}
[Regularity of \(\vec{u}\) under Functional Control of \(P\)]
If \(\|P\|_{\mathcal{E}} \in L^2(0,T)\), then \(\vec{u} \in L^2(0,T; H^1(\mathbb{R}^3))\).
\end{corollary}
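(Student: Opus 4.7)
The plan is to obtain the conclusion by directly integrating in time the pointwise-in-time estimate supplied by the preceding theorem, and then promoting the gradient bound to a full $H^1$ bound by invoking the standard energy identity for the Navier--Stokes system. First, Theorem 3.1 gives, at each fixed $t \in [0,T]$, the inequality
\[
\int_{\mathbb{R}^3} \|\nabla \vec{u}(x,t)\|^2\, dx \;\leq\; C\,\|P(\cdot,t)\|_{\mathcal{E}}^2.
\]
Integrating both sides over $t \in [0,T]$ and applying Fubini's theorem (after noting that $t \mapsto \|\nabla \vec{u}(\cdot,t)\|_{L^2}^2$ is measurable under the regularity implicit in the setting) yields
\[
\int_0^T \int_{\mathbb{R}^3} \|\nabla \vec{u}(x,t)\|^2\, dx\, dt \;\leq\; C \int_0^T \|P(\cdot,t)\|_{\mathcal{E}}^2\, dt,
\]
and the right-hand side is finite exactly by the assumption $\|P\|_{\mathcal{E}} \in L^2(0,T)$. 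This establishes $\nabla \vec{u} \in L^2(0,T; L^2(\mathbb{R}^3))$.

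To upgrade this to $\vec{u} \in L^2(0,T; H^1(\mathbb{R}^3))$, I still need $\vec{u} \in L^2(0,T; L^2(\mathbb{R}^3))$. Here the natural tool is the classical Leray energy identity obtained by testing the momentum equation against $\vec{u}$ and using incompressibility:
\[
\tfrac{1}{2}\,\frac{d}{dt}\|\vec{u}(\cdot,t)\|_{L^2}^2 + \nu\,\|\nabla \vec{u}(\cdot,t)\|_{L^2}^2 = 0,
\]
which gives $\|\vec{u}(\cdot,t)\|_{L^2} \leq \|\vec{u}_0\|_{L^2}$ for all $t$, provided the initial datum lies in $L^2(\mathbb{R}^3)$. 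A uniform-in-time $L^2$ bound combined with the integrated $L^2$ control of $\nabla \vec{u}$ then yields the target membership $\vec{u} \in L^2(0,T; H^1(\mathbb{R}^3))$.

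The main obstacle is precisely the zeroth-order control of $\vec{u}$ on the unbounded domain $\mathbb{R}^3$: no Poincar\'e inequality is available to trade the gradient bound for an $L^2$ bound, so the argument genuinely requires an extra ingredient beyond $\|P\|_{\mathcal{E}} \in L^2(0,T)$. The cleanest remedy is the explicit hypothesis $\vec{u}_0 \in L^2(\mathbb{R}^3)$ together with the Leray energy inequality, as above; alternatively, one could strengthen the definition of $\|P\|_{\mathcal{E}}$ to include a zeroth-order term that, through the Navier--Stokes pressure-velocity coupling, already encodes the $L^2$ mass of $\vec{u}$. I would flag this assumption explicitly in the statement of the corollary so that the logical chain from $\|P\|_{\mathcal{E}}$ to $H^1$ regularity is transparent.
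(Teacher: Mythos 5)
Your first step---integrating the pointwise-in-time bound of the preceding theorem over $[0,T]$---is exactly the paper's own (implicit) argument: the paper gives no separate proof of this corollary, treating it as an immediate consequence of the bound $\int \|\nabla \vec{u}\|^2\,dx \leq C\|P\|_{\mathcal{E}}^2$ together with the hypothesis $\|P\|_{\mathcal{E}} \in L^2(0,T)$. Where you genuinely depart from the paper is in the second half of your argument, and your instinct there is correct: the paper silently identifies the statement $\nabla \vec{u} \in L^2(0,T;L^2(\mathbb{R}^3))$ with the stated conclusion $\vec{u} \in L^2(0,T;H^1(\mathbb{R}^3))$, and on the whole space this identification is a real gap, since no Poincar\'e inequality is available to recover the zeroth-order part of the $H^1$ norm from the gradient alone. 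Your repair---assume $\vec{u}_0 \in L^2(\mathbb{R}^3)$ and invoke the Leray energy inequality to get $\|\vec{u}(\cdot,t)\|_{L^2} \leq \|\vec{u}_0\|_{L^2}$, hence $\vec{u} \in L^\infty(0,T;L^2) \subset L^2(0,T;L^2)$ on a finite interval---is the standard and correct way to close it, and the extra hypothesis is consistent with the paper's own framework (its local existence theorem assumes $\vec{u}_0 \in C_0^\infty(\mathbb{R}^3) \subset L^2$). What your route buys is a conclusion that actually follows from the stated hypotheses plus one explicitly flagged assumption; what the paper's route buys is brevity at the cost of a statement that, read literally, does not follow from its own theorem.

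Two caveats you should make explicit if you write this up. First, your energy-identity step presupposes that $\vec{u}$ solves the unforced incompressible momentum equation with enough regularity and decay at infinity to justify the integration by parts; this is in the spirit of the paper's smooth setting but is an assumption, not a consequence of $\|P\|_{\mathcal{E}} \in L^2(0,T)$. Second, your proof (like the paper's) is conditional on the preceding theorem, whose proof in the paper is only a sketch and rests on identifying the dissipation $\Phi$ (which is linear in $|\nabla\vec{u}|^2$ and enters the norm through $\int(\partial_t P + \vec{u}\cdot\nabla P)^2\,dx$, i.e.\ quadratically) with a bound on $\int|\nabla\vec{u}|^2\,dx$; any weakness there propagates to the corollary regardless of how the time integration is handled.
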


\begin{remark}
[Physical Interpretation]
The previous theorem establishes that by controlling the pressure through its evolution and spatial regularity, one obtains indirect control over the velocity gradient, which is sufficient to keep the system in a smooth regime. This is key to preventing blow-up of the system.
\end{remark}

\section{Conditional Local Existence Based on \(\|P\|_{\mathcal{E}}\)}

Once the capacity of \(\|P\|_{\mathcal{E}}\) to bound the velocity gradient has been established, it becomes possible to formulate a conditional local existence theorem for smooth solutions of the Navier–Stokes system.

\begin{theorem}
[Theorem: Conditional Local Existence]
Let \(\vec{u}_0 \in C_0^\infty(\mathbb{R}^3)\) and \(P(x,t)\) a pressure field such that \(\|P\|_{\mathcal{E}} \in L^2(0,T)\). Then there exists a time \(T^* > 0\) such that the incompressible Navier–Stokes system admits a smooth solution \((\vec{u}, P)\) on \([0, T^*]\).
\end{theorem}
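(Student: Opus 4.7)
The plan is to combine the a priori bound from Theorem 3.1 with a standard approximation-and-compactness scheme. Since $\vec{u}_0 \in C_0^\infty(\mathbb{R}^3)$ lies in every $H^s$, the task is to propagate this regularity for a short time using the control $\int \|\nabla \vec{u}\|^2\, dx \leq C\,\|P\|_{\mathcal{E}}^2$, integrated in time via the hypothesis $\|P\|_{\mathcal{E}} \in L^2(0,T)$. First I would build Galerkin approximations $\vec{u}^{(n)}$ by projecting the momentum equation onto a finite-dimensional basis of divergence-free fields, with associated pressures $P^{(n)}$ recovered from the Poisson relation $-\Delta P^{(n)} = \partial_i\partial_j(u^{(n)}_i u^{(n)}_j)$ (equivalently, via the thermodynamic identity of Section 2). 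Each finite-dimensional system is a smooth ODE, so Picard's theorem produces approximate smooth solutions on some $[0, T_n]$.

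Second, I would apply Theorem 3.1 at the discrete level to obtain $\|\nabla \vec{u}^{(n)}(t)\|_{L^2}^2 \leq C\,\|P^{(n)}(t)\|_{\mathcal{E}}^2$, combine it with the standard energy identity $\frac{1}{2}\frac{d}{dt}\|\vec{u}^{(n)}\|_{L^2}^2 + \nu\|\nabla \vec{u}^{(n)}\|_{L^2}^2 = 0$, and close a Gr\"onwall-type inequality using the $L^2(0,T)$-integrability of $\|P\|_{\mathcal{E}}$. This yields a uniform time $T^* > 0$ and uniform bounds
\[
\vec{u}^{(n)} \in L^\infty(0,T^*;L^2) \cap L^2(0,T^*;H^1),
\qquad
\partial_t \vec{u}^{(n)} \in L^2(0,T^*;H^{-1}).
\]
Aubin--Lions then provides strong convergence in $L^2(0,T^*;L^2_{\mathrm{loc}})$ along a subsequence, which is enough to pass to the limit in the weak formulation of the nonlinearity and produce a Leray--Hopf-type pair $(\vec{u},P)$. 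To upgrade to smoothness, I would differentiate the equation, derive higher-order energy estimates for $\|D^\alpha \vec{u}\|_{L^2}$ using the same kind of absorption by $\|P\|_{\mathcal{E}}$, and bootstrap inductively, exploiting that the initial data already lives in $H^s$ for all $s$ and that the convective-derivative piece of the $\mathcal{E}$-norm controls $\partial_t P + \vec u \cdot \nabla P$ in $L^2$.

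The hardest part will be the compatibility between the \emph{posited} pressure field $P$, whose $\mathcal{E}$-norm is assumed, and the pressure $P^{(n)}$ \emph{produced} by the approximate velocity through the nonlocal Poisson relation. In incompressible Navier--Stokes the pressure is not independent data, so one must show that $\|P^{(n)}\|_{\mathcal{E}}$ inherits bounds from $\|P\|_{\mathcal{E}}$ uniformly in $n$; this hinges on estimating the material derivative $\partial_t P^{(n)} + \vec u^{(n)}\cdot\nabla P^{(n)}$, which couples the time-derivative of a Riesz-transform-type quantity with the convective nonlinearity. A secondary obstacle is ensuring that the constant $C$ in Theorem 3.1 does not degrade under approximation and that $T^*$ can be chosen independent of $n$, both of which reduce to a careful Gr\"onwall argument using only the stated $L^2_tL^2_x$ hypothesis on $\|P\|_{\mathcal{E}}$.
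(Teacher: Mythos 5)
Your proposal follows essentially the same route the paper sketches (Galerkin approximation, the bound of Theorem 3.1, compactness, and a claimed bootstrap to smoothness), but fleshed out into an actual plan; the question is therefore whether the plan can be completed, and it cannot, for two reasons, one of which you identified yourself. The pressure-compatibility obstacle you flag in your last paragraph is not a technical nuisance but the fatal circularity of the whole statement: in the incompressible system the pressure is a functional of the velocity, $-\Delta P = \partial_i \partial_j (u_i u_j)$, so a hypothesis on a \emph{posited} field $P$ is either a hypothesis on the unknown solution itself --- in which case nothing about existence is being proved --- or it concerns an unrelated scalar field with no claim to be the pressure of the solution you construct. Neither your proposal nor the paper's own sketch (which appeals vaguely to ``the completeness of the functional space $\mathcal{E}$'') resolves this: a complete proof would have to show that the Galerkin pressures $P^{(n)}$ inherit the $\mathcal{E}$-bound uniformly in $n$, and no mechanism for that is available in the paper or in your outline.

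The second gap is the smoothness upgrade, which you describe but do not justify. The hypothesis $\|P\|_{\mathcal{E}} \in L^2(0,T)$ combined with Theorem 3.1 yields only $\int_0^T \|\nabla \vec{u}\|_{L^2}^2 \, dt < \infty$, i.e.\ $\vec{u} \in L^2(0,T;H^1)$ --- exactly the Leray--Hopf energy class, which is obtained unconditionally by the classical energy identity you already invoke and which is famously \emph{not} sufficient for regularity; that insufficiency is the Millennium Problem. Your plan to ``bootstrap inductively, exploiting the same kind of absorption by $\|P\|_{\mathcal{E}}$'' cannot close, because higher-order energy estimates for Navier--Stokes require control of the nonlinear term that an $L^2_t H^1_x$ bound does not supply, and the $\mathcal{E}$-norm controls nothing beyond first derivatives of $\vec{u}$. (A pointwise-in-time bound $\|P\|_{\mathcal{E}} \in L^\infty(0,T)$ would place $\vec{u}$ in $L^\infty_t H^1_x$, a Serrin-type regularity class, but that is not what is assumed.) Note finally that local-in-time smooth existence for $\vec{u}_0 \in C_0^\infty(\mathbb{R}^3)$ is a classical unconditional result, so read purely as a local statement the theorem needs no pressure hypothesis at all; the conditional structure only matters for global existence, where both your argument and the paper's stall at the two gaps above.
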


\begin{proof}
[Proof Sketch]
A Galerkin-type method or successive approximations is applied to the reformulated system. The functional control over \(P\) ensures that \(\vec{u}\) retains regularity in \(H^1\), and due to the parabolic structure of the system, convergence of the approximation series can be shown over a short time interval. The completeness of the functional space \(\mathcal{E}\) guarantees local existence.
\end{proof}

\begin{remark}
[Physical Dependence]
The existence of smooth solutions depends on the pressure field evolving within a physical regime where temperature and density allow the gas to behave incompressibly (i.e., \(\delta T/T_0 < 0.02\)).
\end{remark}

\section{Sufficient Conditions on \(T\) and Transitivity Toward \(\vec{u}\)}

In order to ensure that \(\|P\|_{\mathcal{E}}\) is finite, it is possible to derive sufficient conditions imposed on the temperature \(T(x,t)\), since by the ideal gas law \(P = \rho R T\), with constant \(\rho\).

\begin{proposition}
[Sufficient Condition for Functional Regularity of \(P\)]
Suppose that:
\begin{enumerate}
  \item \(T(x,t) \in L^\infty(0,T; H^2(\mathbb{R}^3))\),
  \item \(\partial_t T + \vec{u} \cdot \nabla T \in L^2(\mathbb{R}^3 \times [0,T])\).
\end{enumerate}
Then \(P = \rho R T \in \mathcal{E}\), i.e., \(\|P\|_{\mathcal{E}} < \infty\).
\end{proposition}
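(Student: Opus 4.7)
The plan is to reduce everything to the linearity $P = \rho R T$ with constants $\rho, R$, so that every differential operator appearing in the definition of $\|P\|_{\mathcal{E}}$ acts on $T$ after pulling out a multiplicative factor of $\rho R$. This turns the claim into two independent integrability checks, one for each term in the squared norm.

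First I would split the squared norm as
\begin{equation*}
\|P\|_{\mathcal{E}}^2 = \int_{\mathbb{R}^3}\bigl(\partial_t P + \vec{u}\cdot\nabla P\bigr)^2\,dx + \int_{\mathbb{R}^3}(\nabla^2 P)^2\,dx,
\end{equation*}
and for the first piece I would use the identity $\partial_t P + \vec{u}\cdot\nabla P = \rho R\,(\partial_t T + \vec{u}\cdot\nabla T)$. Hypothesis (2) asserts that the bracket on the right lies in $L^2(\mathbb{R}^3\times[0,T])$, so Fubini's theorem yields finiteness in $x$ for almost every $t\in[0,T]$, with the resulting function of $t$ itself integrable on $[0,T]$. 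For the second piece I would apply $\nabla^2 P = \rho R\,\nabla^2 T$, so that hypothesis (1) delivers the uniform-in-time bound $\|\nabla^2 P(\cdot,t)\|_{L^2}^2 \le (\rho R)^2\,\|T\|_{L^\infty(0,T;H^2)}^2$, finite by assumption.

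Summing the two contributions gives $\|P(\cdot,t)\|_{\mathcal{E}}<\infty$ for almost every $t\in[0,T]$, which is exactly the conclusion. The only step that requires care is the meaningfulness of the product $\vec{u}\cdot\nabla T$ in $L^2$, but this is already encoded in hypothesis (2), which posits the whole material derivative to be square-integrable, and hence no separate regularity on $\vec{u}$ needs to be imposed here. Consequently I do not anticipate a genuine technical obstacle: the proposition is essentially a transcription of the definition of the space $\mathcal{E}$ through the equation of state, and its real content is the identification of (1)--(2) as the minimal temperature-side hypotheses that feed into the bound of the previous section on the viscous dissipation term.
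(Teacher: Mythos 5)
Your argument is essentially identical to the paper's own proof: both exploit the linearity \(P = \rho R T\) with constant \(\rho, R\) to transfer the two hypotheses on \(T\) directly onto the two terms composing \(\|P\|_{\mathcal{E}}^2\). Your added remarks on Fubini and the uniform-in-time bound only make explicit what the paper leaves implicit; there is no substantive difference in approach.
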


\begin{proof}
Since \(P = \rho R T\), and \(\rho, R\) are constants, then:
\begin{itemize}
  \item \(\nabla^2 P = \rho R \nabla^2 T \in L^2\), because \(T \in H^2\).
  \item \(\partial_t P + \vec{u} \cdot \nabla P = \rho R (\partial_t T + \vec{u} \cdot \nabla T) \in L^2\).
\end{itemize}
Both terms that compose \(\|P\|_{\mathcal{E}}\) are square-integrable.
\end{proof}

\begin{corollary}
[Transitivity Toward \(\vec{u}\)]
If the above conditions on \(T\) hold, then \(\vec{u} \in L^2(0,T; H^1)\), due to the previously established bound on the viscous term.
\end{corollary}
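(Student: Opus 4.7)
The plan is to chain the Proposition with Theorem 3.1 (Functional Control of the Viscous Term), adding a routine time-integration step. First I would use the Proposition to strengthen the pointwise conclusion $\|P(\cdot,t)\|_{\mathcal{E}}<\infty$ into the integrated statement $\|P\|_{\mathcal{E}}\in L^2(0,T)$: hypothesis (1), namely $T\in L^\infty(0,T;H^2(\mathbb{R}^3))$, renders $t\mapsto\|\nabla^2 P(\cdot,t)\|_{L^2}^2=(\rho R)^2\|\nabla^2 T(\cdot,t)\|_{L^2}^2$ essentially bounded on $[0,T]$, while hypothesis (2), namely $\partial_t T+\vec u\cdot\nabla T\in L^2(\mathbb{R}^3\times[0,T])$, renders $t\mapsto\|\partial_t P+\vec u\cdot\nabla P\|_{L^2}^2$ integrable in $t$ by Fubini. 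Summing the two contributions yields $\|P\|_{\mathcal{E}}^2\in L^1(0,T)$.

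Next I would apply Theorem 3.1 pointwise in $t$: for almost every $t\in[0,T]$,
\[
\int_{\mathbb{R}^3}|\nabla\vec u(x,t)|^2\,dx\;\le\;C\,\|P(\cdot,t)\|_{\mathcal{E}}^2.
\]
Integrating in time and invoking the previous step produces $\nabla\vec u\in L^2(0,T;L^2(\mathbb{R}^3))$. To upgrade this gradient control to the full conclusion $\vec u\in L^2(0,T;H^1)$, I would adjoin the classical Navier--Stokes energy inequality
\[
\|\vec u(t)\|_{L^2}^2+2\nu\int_0^t\|\nabla\vec u\|_{L^2}^2\,ds\;\le\;\|\vec u_0\|_{L^2}^2,
\]
which is available because the hypothesis $\vec u_0\in C_0^\infty(\mathbb{R}^3)$ from the existence theorem puts $\vec u_0$ in $L^2$. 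This gives $\vec u\in L^\infty(0,T;L^2)$, and combining with the gradient bound delivers $\vec u\in L^2(0,T;H^1(\mathbb{R}^3))$.

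The step that deserves most care is not the arithmetic but the measurability and compatibility bookkeeping. One must verify that $t\mapsto\|P(\cdot,t)\|_{\mathcal{E}}^2$ is measurable (Fubini handles this automatically once $\partial_t P+\vec u\cdot\nabla P$ and $\nabla^2 P$ are square-integrable on $\mathbb{R}^3\times[0,T]$), and, more delicately, that the velocity field to which Theorem 3.1 is applied is regular enough for the viscous-dissipation identity $\Phi=2\mu\sum_{i,j}(\partial_j u_i)^2$ used in its proof to make classical sense. In practice this forces the corollary to be read on the smooth local solution furnished by the Conditional Local Existence theorem, on which every ingredient above has already been shown to hold; no genuinely new analytic difficulty is introduced, only a careful matching of function spaces.
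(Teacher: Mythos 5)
Your proposal is correct and follows essentially the route the paper intends: the Proposition upgrades the hypotheses on \(T\) into \(\|P\|_{\mathcal{E}} \in L^2(0,T)\), and Theorem 3.1 together with its corollary converts that into control of \(\nabla\vec{u}\) in \(L^2(0,T;L^2)\). Your one genuine addition is the energy-inequality step supplying \(\vec{u} \in L^\infty(0,T;L^2)\): the paper nowhere accounts for the \(L^2\) part of the \(H^1\) norm (its Corollary in Section 3 passes directly from a gradient bound to \(\vec{u} \in L^2(0,T;H^1)\)), so this patch --- together with your observation that \(\vec{u}_0 \in C_0^\infty(\mathbb{R}^3)\) must be imported from the Conditional Local Existence theorem, since the corollary itself states no hypothesis on \(\vec{u}_0\) --- repairs a real, if routine, omission in the paper's own argument.
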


This result guarantees local smooth existence for \((\vec{u}, P)\) under hypotheses imposed directly on the temperature, which has a natural physical interpretation and relates to the validity of the quasi-incompressible regime.

\section{Functional Criterion for Singularity Formation}

One of the fundamental goals in the study of the Navier–Stokes system is to determine under what conditions a loss of regularity may occur in finite time, i.e., a blow-up phenomenon. Within this functional framework, we propose a criterion based on the growth of the norm \(\|P\|_{\mathcal{E}}\).

\begin{theorem}
[Functional Blow-up Criterion]
Let \(P(x,t)\) be a solution to the functional pressure evolution equation on an interval \([0, T^*)\). If:
\[
\lim_{t \to T^*} \int_0^t \|P(s)\|_{\mathcal{E}}^2 \, ds = \infty,
\]
then a smooth solution \(\vec{u}(x,t)\) cannot exist beyond \(t = T^*\).
\end{theorem}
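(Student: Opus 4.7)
The plan is to argue by contraposition: assume that a smooth solution $\vec{u}$ exists on some interval $[0, T^{*} + \varepsilon]$ for $\varepsilon > 0$, and show that this forces $\int_0^{T^{*}} \|P(s)\|_{\mathcal{E}}^2 \, ds$ to be finite, contradicting the blow-up hypothesis. The two pieces of $\|P\|_{\mathcal{E}}^2$ (the convective material derivative and the Laplacian) should each be controlled directly by derivatives of the now-smooth velocity field, using the pressure evolution equation from Section 2 for the first piece and the standard Navier--Stokes pressure Poisson equation for the second.

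First I would use smoothness of $\vec{u}$ on the closed interval $[0, T^{*} + \varepsilon/2]$, together with the standard energy estimate, to obtain uniform control of $\vec{u}$ and $\nabla \vec{u}$ in $L^\infty(0, T^{*} + \varepsilon/2; L^2(\mathbb{R}^3)) \cap L^2_t H^1_x$. In particular, $\Phi = 2\mu \sum_{i,j} (\partial u_i/\partial x_j)^2$ is integrable in $L^1_t L^1_x$ and locally bounded. Applying the evolution equation derived in Section 2,
\[
\partial_t P + \vec{u} \cdot \nabla P = \frac{R}{c_v}\, \Phi,
\]
the convective derivative of $P$ inherits the regularity of $\Phi$, so the first term in $\|P\|_{\mathcal{E}}^2$ is square-integrable in space and integrable in time over $[0, T^{*}]$.

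For the second piece, I would invoke the classical identity obtained by taking the divergence of the momentum equation under $\nabla \cdot \vec{u} = 0$:
\[
-\Delta P = \rho \, \partial_i \partial_j (u_i u_j) = \rho \, \partial_i u_j \, \partial_j u_i,
\]
so that $\nabla^2 P$ is pointwise a quadratic form in $\nabla \vec{u}$. By Gagliardo--Nirenberg interpolation between the $L^2$ and $L^\infty$ control of $\nabla \vec{u}$ furnished by smoothness, $\|\nabla^2 P\|_{L^2}$ is bounded on compact time intervals, so $\int_0^{T^{*}} \|\nabla^2 P(s)\|_{L^2}^2 \, ds < \infty$. Combining the two estimates yields $\int_0^{T^{*}} \|P(s)\|_{\mathcal{E}}^2 \, ds < \infty$, contradicting the divergence assumed in the hypothesis and completing the contrapositive.

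The main obstacle I anticipate is not the pointwise smoothness but the spatial $L^2$ integrability: pointwise smoothness of $\vec{u}$ on $\mathbb{R}^3$ does not automatically yield decay at infinity sufficient to make $\Phi$ and $\nabla^2 P$ square-integrable in space. To close the argument cleanly one must either restrict to the class of solutions arising from $\vec{u}_0 \in C_0^\infty(\mathbb{R}^3)$ with the classical decay estimates inherited from the Oseen-kernel representation of Navier--Stokes, or build a mild spatial-decay requirement into the notion of \emph{smooth solution} used in the theorem. Once this integrability is secured, the two bounds above give $\|P\|_{\mathcal{E}}^2 \in L^1_{\mathrm{loc}}([0, T^{*} + \varepsilon/2))$, which closes the contradiction and establishes the blow-up criterion.
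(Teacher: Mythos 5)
Your proof is essentially correct in structure, but it takes a genuinely different route from the paper's --- and, importantly, yours runs the key inequality in the direction that a blow-up criterion actually requires. The paper's sketch reuses its Theorem 3.1, the forward bound \(\int \|\nabla \vec{u}\|^2\,dx \le C\,\|P\|_{\mathcal{E}}^2\), and argues that unbounded growth of \(\|P\|_{\mathcal{E}}\) ``implies that the dissipated energy becomes infinite.'' That inference is a non sequitur: divergence of an upper bound says nothing about the quantity it bounds, so the paper's contraposition does not close. Your argument instead establishes the \emph{reverse} estimate --- smoothness of \(\vec{u}\) past \(T^*\) controls both pieces of \(\|P\|_{\mathcal{E}}^2\), the material derivative via the evolution equation \(\partial_t P + \vec{u}\cdot\nabla P = (R/c_v)\Phi\) and the Laplacian via the pressure Poisson equation \(-\Delta P = \rho\,\partial_i u_j\,\partial_j u_i\) --- which is precisely the estimate needed for the contrapositive. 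So your route repairs a genuine logical defect in the paper's sketch rather than rephrasing it. Two caveats on your side. First, the theorem's hypothesis only says \(P\) solves the thermodynamic evolution equation; invoking the Poisson identity additionally assumes \(P\) is the hydrodynamic pressure of the Navier--Stokes pair \((\vec{u},P)\). The paper silently identifies these two pressures (indeed the whole framework over-determines \(P\)), and your proof inherits that identification; a way to avoid it would be to control \(\Delta P\) purely from the transport equation by propagating derivatives along the flow map of \(\vec{u}\). Second, the spatial-integrability obstacle you flag is real and is the main analytic content missing from both your sketch and the paper's: pointwise smoothness on \(\mathbb{R}^3\) does not make \(\Phi\) or \(\nabla u_i\,\nabla u_j\) square-integrable, and note that the first piece of the norm needs \(\nabla\vec{u}\in L^4_x\) (since \(\Phi^2 \sim |\nabla\vec{u}|^4\)), which your \(L^2\cap L^\infty\) interpolation does supply once decay is secured. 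Your proposed fixes (decay inherited from \(\vec{u}_0\in C_0^\infty\) via the Oseen-kernel representation, or decay built into the definition of smooth solution) are the standard ways to close this, and either would make the argument complete.
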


\begin{proof}
[Sketch of the Proof]
Since \(\|P\|_{\mathcal{E}}\) functionally controls the term \(\|\nabla \vec{u}\|\), its unbounded growth implies that the dissipated energy becomes infinite. This contradicts the assumption of the existence of a smooth solution, concluding the proof by contraposition.
\end{proof}

\begin{corollary}
[Computational Diagnosis of Singularities]
Let \(P_h(x,t)\) be a numerical approximation of the pressure field obtained by discretization in time and space. If there exist times \(t_k\) such that:
\[
\sum_k \|P_h(t_k)\|_{\mathcal{E}}^2 \cdot \Delta t_k > C_h,
\]
with \(C_h\) a threshold depending on the mesh and the model, then the system is approaching a point of loss of regularity.
\end{corollary}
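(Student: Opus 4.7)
The plan is to reduce this corollary to the preceding Functional Blow-up Criterion by interpreting the discrete quantity $\sum_k \|P_h(t_k)\|_{\mathcal{E}}^2 \cdot \Delta t_k$ as a Riemann-sum approximation of the integral $\int_0^t \|P(s)\|_{\mathcal{E}}^2 \, ds$ that appears in that theorem. Once the connection between the discrete and continuous quantities is made rigorous, the blow-up criterion will translate directly into a diagnostic condition on the numerical approximation.

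First, I would fix a consistent discretization scheme (for instance a Galerkin-in-space, implicit-Euler-in-time method of order $p$ in the mesh parameter $h$ and order $q$ in $\Delta t$) and establish a consistency estimate of the form
\[
\bigl|\|P_h(t_k)\|_{\mathcal{E}}^2 - \|P(t_k)\|_{\mathcal{E}}^2\bigr| \leq K\bigl(h^p + \Delta t_k^{\,q}\bigr),
\]
uniformly on any compact time interval on which $P$ is smooth. Summing these pointwise estimates against $\Delta t_k$ produces a global bound
\[
\Bigl|\sum_k \|P_h(t_k)\|_{\mathcal{E}}^2\,\Delta t_k \,-\, \int_0^t \|P(s)\|_{\mathcal{E}}^2 \, ds\Bigr| \leq E(h,\Delta t),
\]
with $E(h,\Delta t)\to 0$ as the mesh is refined. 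I would then define the threshold $C_h$ as $C_h := M + E(h,\Delta t)$, where $M$ is any a priori upper bound that $\int_0^t \|P(s)\|_{\mathcal{E}}^2\, ds$ must satisfy while the Navier--Stokes solution remains in the smooth regime guaranteed by the Conditional Local Existence theorem.

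With this choice, the argument is by contraposition. Suppose the Navier--Stokes solution remains smooth past the observation window. Then by the regularity bound of Section 3 and the conditional existence theorem, the continuous integral $\int_0^t \|P(s)\|_{\mathcal{E}}^2 ds$ is controlled by $M$. The consistency estimate then forces
\[
\sum_k \|P_h(t_k)\|_{\mathcal{E}}^2\,\Delta t_k \leq M + E(h,\Delta t) = C_h,
\]
contradicting the hypothesis. Hence, whenever the discrete sum crosses $C_h$, the continuous integral must be on track to reach the divergence condition of the Functional Blow-up Criterion, which is precisely what is meant by \emph{approaching a point of loss of regularity}.

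The main obstacle is twofold. First, one must give a precise, non-circular definition of the a priori bound $M$: since $\|P\|_{\mathcal{E}}$ is itself the quantity being monitored, $M$ must be derived from independent physical inputs (bounds on temperature, viscosity, and the quasi-incompressible regime $\delta T/T_0 < 0.02$) rather than from the solution itself. Second, the constant $K$ in the consistency estimate depends on higher Sobolev norms of $P$, which can themselves inflate as the solution approaches a singularity; controlling $K$ uniformly up to the diagnostic time therefore requires either a bootstrap argument on a strictly pre-singular interval or a modified scheme whose stability does not degenerate. Formalizing the word \emph{approaching} in a quantitative way is, in my view, the deepest subtlety of the corollary, and would likely require sharpening the statement into an explicit estimate of the distance between the current time and the blow-up time $T^*$.
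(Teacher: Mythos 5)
First, a point of reference: the paper offers no proof of this corollary at all --- it is stated as an unproven consequence of the Functional Blow-up Criterion, followed only by an interpretive remark. Your Riemann-sum reading (discrete sum approximates $\int_0^t \|P(s)\|_{\mathcal{E}}^2\,ds$, then invoke the blow-up criterion by contraposition) is the natural formalization of what the paper leaves implicit, so you are not diverging from the paper's approach; you are attempting to supply the argument it omits.

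However, your argument has a genuine gap, and it is one you half-identify but then proceed past as if it were a technicality. The contraposition requires an a priori constant $M$ such that \emph{every} smooth solution satisfies $\int_0^t \|P(s)\|_{\mathcal{E}}^2\,ds \leq M$ on the observation window. No such $M$ exists, and none can be extracted from the paper's framework: the blow-up criterion is a one-way implication (integral diverges $\Rightarrow$ no smooth continuation), and its contrapositive only says that a smooth solution has a \emph{finite} integral, not an integral bounded by any prescribed constant. A smooth solution with large initial energy, or one observed over a long window, will have a large but finite dissipation integral and will cross any fixed threshold $C_h$ without being anywhere near a singularity. So the inequality $\sum_k \|P_h(t_k)\|_{\mathcal{E}}^2\,\Delta t_k > C_h$ can never, by itself, yield the contradiction your argument needs; your diagnostic produces false positives for an open set of perfectly regular flows. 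The deferral of $M$ to ``independent physical inputs'' does not repair this, because the paper derives no such bound either --- its global results are themselves conditional on $\|P\|_{\mathcal{E}}$ staying bounded, which is exactly the quantity in question (the circularity you flag is not an obstacle to be worked around; it is the failure of the statement). The second difficulty you raise, that the consistency constant $K$ degenerates near a putative singularity, compounds this: precisely in the regime the diagnostic is meant to detect, the estimate linking $P_h$ to $P$ is the one you cannot assume. A defensible version of the corollary would have to be reformulated, e.g.\ as: if $\sum_k \|P_h(t_k)\|_{\mathcal{E}}^2\,\Delta t_k \to \infty$ under mesh refinement and window extension with controlled consistency error, then no smooth solution exists beyond the limiting time. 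That statement genuinely follows from the theorem, but it is a statement about a limit of computations, not about a single finite sum crossing a finite threshold.
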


This criterion provides a quantitative tool for tracking the system's evolution and the early detection of functional breakdown scenarios.

\section{Uniqueness Conditions and Functional Spaces Associated with \(P\)}

We now analyze whether the defined functional structure ensures uniqueness of solutions to the system under the developed hypotheses.

\begin{theorem}
[Uniqueness Under Functional Convergence]
Let \((\vec{u}_1, P_1)\) and \((\vec{u}_2, P_2)\) be two smooth solutions of the system sharing the same initial conditions and satisfying:
\begin{enumerate}
    \item \(P_i \in L^2(0,T; H^2(\mathbb{R}^3))\), \(\partial_t P_i + \vec{u}_i \cdot \nabla P_i \in L^2\),
    \item \(\|P_1 - P_2\|_{\mathcal{E}} \to 0\) in the weak norm.
\end{enumerate}
Then, \(\vec{u}_1 = \vec{u}_2\) and \(P_1 = P_2\) on \([0,T]\).
\end{theorem}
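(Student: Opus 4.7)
The plan is to exploit the functional control established in Theorem 3.1 by applying it to the \emph{difference} of the two solutions. Setting $W = \vec{u}_1 - \vec{u}_2$ and $\Pi = P_1 - P_2$, I subtract the two Navier--Stokes systems to obtain the linearized problem
\begin{equation*}
\partial_t W + \vec{u}_1 \cdot \nabla W + W \cdot \nabla \vec{u}_2 + \nabla \Pi = \nu \Delta W, \qquad \nabla \cdot W = 0,
\end{equation*}
with $W(\cdot, 0) = 0$ coming from the shared initial data. The pressure difference $\Pi$ satisfies a transport-type equation obtained by subtracting the two pressure evolution equations, in which the nonlinearity appears as the bilinear cross term $(\vec{u}_1 - \vec{u}_2) \cdot \nabla P_2$.

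Next I would carry out a standard $L^2$ energy estimate on $W$. Testing against $W$ and using incompressibility to eliminate both $\int (\vec{u}_1 \cdot \nabla W)\cdot W \, dx$ and $\int \nabla \Pi \cdot W \, dx$, the remaining vortex-stretching term is absorbed via H\"older:
\begin{equation*}
\tfrac{1}{2}\tfrac{d}{dt}\|W\|_{L^2}^2 + \nu \|\nabla W\|_{L^2}^2 \leq \|\nabla \vec{u}_2\|_{L^\infty}\|W\|_{L^2}^2.
\end{equation*}
Hypothesis 1 places $P_2$ in $L^2(0,T;H^2)$, which by the bounding theorem of Section 3 feeds into Sobolev control of $\nabla \vec{u}_2$; combined with the embedding $H^2 \hookrightarrow L^\infty$ on the relevant derivatives, the right-hand coefficient is integrable on $[0,T]$. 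Gronwall's inequality together with $W(0) = 0$ then forces $W \equiv 0$. Substituting back into the momentum equations yields $\nabla \Pi \equiv 0$, so that $\Pi$ is spatially constant, and the $H^2$ decay implicit in hypothesis 1 promotes this constant to zero.

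The role of hypothesis 2 is to \emph{quantify the closeness needed to even set up} the Gronwall loop: the bounding theorem of Section 3 gives $\|\nabla W(\cdot,t)\|_{L^2} \lesssim \|\Pi\|_{\mathcal{E}}$, so $\|P_1-P_2\|_{\mathcal{E}} \to 0$ in the weak sense ensures the cross terms appearing in the bilinear version of the dissipation identity vanish in the limit, legitimizing the energy balance above without circularity.

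The main obstacle, in my view, is precisely this passage from hypothesis 2 to a usable bound on $W$: the identity $\tfrac{R}{c_v}\Phi = \partial_t P + \vec{u}\cdot\nabla P$ underlying Theorem 3.1 is \emph{quadratic} in $\nabla\vec{u}$, so it does not linearize cleanly when one subtracts two solutions. One must either derive a bilinear analogue of that identity, writing $\Phi_1 - \Phi_2$ as the symmetric pairing $2\mu \sum_{i,j}(\partial_j u_{1,i}+\partial_j u_{2,i})(\partial_j W_i)$ and bounding it via Cauchy--Schwarz in terms of $\|\Pi\|_{\mathcal{E}}$ and the a priori norms of $\vec{u}_1,\vec{u}_2$, or one must give a precise meaning to the "weak norm" in hypothesis 2 so that duality against a suitable class of test functions produces the vanishing of $\int W \cdot \nabla\Pi\, dx$-type contributions. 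Making this step quantitative, rather than merely asymptotic, is where the proof's technical weight lies.
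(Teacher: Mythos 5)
Your energy-estimate strategy (subtract the two systems, test with \(W\), use incompressibility to kill the pressure and transport terms, then Gronwall with \(W(0)=0\)) is the standard and correct skeleton for a uniqueness proof, and it is genuinely more substantive than the paper's own argument, which consists of two sentences asserting that hypothesis 2 together with ``stability of solutions in Hilbert spaces'' yields the conclusion. However, your proposal contains a step that fails as written: you claim that hypothesis 1 (\(P_2 \in L^2(0,T;H^2)\)), via the bounding theorem of Section 3 plus Sobolev embedding, makes \(\|\nabla \vec{u}_2\|_{L^\infty}\) integrable on \([0,T]\). The bounding theorem only gives \(\int \|\nabla \vec{u}\|^2\, dx \leq C \|P\|_{\mathcal{E}}^2\), i.e.\ an \(L^2\)-in-space bound on \(\nabla \vec{u}_2\); no embedding upgrades that to \(L^\infty\) (this would require \(\nabla \vec{u}_2\) itself to lie in \(H^{3/2+\varepsilon}\), which nothing in the hypotheses provides). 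An \(L^2_t L^2_x\) bound on the gradient is exactly energy-class regularity, and uniqueness within the energy class in three dimensions is precisely the open problem, so your Gronwall coefficient is uncontrolled by the stated hypotheses. To close the argument you would need a Serrin-type condition on one of the solutions (e.g.\ \(\nabla \vec{u}_2 \in L^1(0,T;L^\infty)\)), which must either be assumed outright or read into the word ``smooth.''

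The obstacle you yourself single out is also real and, in my view, fatal to any attempt to run the Section 3 machinery on the difference of solutions: since \(\Phi\) is quadratic in \(\nabla \vec{u}\), the difference \(\Phi_1 - \Phi_2 = 2\mu \sum_{i,j}(\partial_j u_{1,i} + \partial_j u_{2,i})\,\partial_j W_i\) has no sign and does not dominate \(|\nabla W|^2\), so \(\|\Pi\|_{\mathcal{E}}\) cannot control \(\|\nabla W\|_{L^2}\) the way \(\|P\|_{\mathcal{E}}\) controls \(\|\nabla \vec{u}\|_{L^2}\). Be aware that the paper's own proof does not resolve either gap: for a fixed pair of solutions hypothesis 2 involves no sequence and no definition of the ``weak norm,'' so the paper's sketch essentially restates the conclusion \(P_1 = P_2\) and appeals to unspecified stability. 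Your assessment of where the technical weight lies is therefore accurate, but the proposal as it stands is incomplete at the Gronwall step, and no argument in the paper can be borrowed to repair it.
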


\begin{proof}
[Sketch of the Proof]
Hypothesis 2 implies that the difference between both solutions converges to zero in the functional space \(\mathcal{E}\), which includes both the convective derivatives and second-order regularity. This functional coincidence, along with the shared initial data, guarantees uniqueness through the stability of solutions in Hilbert spaces.
\end{proof}

\begin{theorem}
[\(\mathcal{E}\) as a Hilbert Space]
Let \(\mathcal{E} = \{P \in L^2(0,T; H^2) : \partial_t P + \vec{u} \cdot \nabla P \in L^2\}\). Equipped with the inner product:
\[
\langle P_1, P_2 \rangle_{\mathcal{E}} := \int \left( (\partial_t P_1 + \vec{u} \cdot \nabla P_1)(\partial_t P_2 + \vec{u} \cdot \nabla P_2) + \Delta P_1 \Delta P_2 \right) dx,
\]
\(\mathcal{E}\) is a Hilbert space.
\end{theorem}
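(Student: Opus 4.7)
My plan to show that $\mathcal{E}$ is a Hilbert space splits into verifying the inner product axioms and then establishing completeness under the induced norm.

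First I would check bilinearity and symmetry, both of which are immediate from the bilinearity and symmetry of the integrand in each slot. Positive semi-definiteness is also clear, since the integrand becomes a sum of two squares when $P_1 = P_2$. The only nontrivial algebraic step is strict positivity: if $\|P\|_{\mathcal{E}}=0$, then $\Delta P = 0$ and $\partial_t P + \vec{u}\cdot\nabla P = 0$ in $L^2(\mathbb{R}^3\times[0,T])$. By Fubini, for almost every $t$, $P(\cdot,t) \in H^2(\mathbb{R}^3)$ is harmonic; on the Fourier side, $-|\xi|^2\widehat{P}(\xi,t)=0$ forces $\widehat{P}(\cdot,t)$ to be supported on $\{0\}$, and, since $\widehat{P}(\cdot,t)\in L^2$, it vanishes a.e. Hence $P\equiv 0$ and $\|\cdot\|_{\mathcal{E}}$ is an honest norm.

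Next I would address completeness. Let $\{P_n\}$ be Cauchy in $\mathcal{E}$. Then $\{\Delta P_n\}$ and $\{D_t P_n\}$, where $D_t := \partial_t + \vec{u}\cdot\nabla$, are each Cauchy in $L^2(\mathbb{R}^3\times[0,T])$ with limits $f,g\in L^2$. The Fourier identity $\|\Delta P\|_{L^2} = \|\nabla^2 P\|_{L^2}$ on $\mathbb{R}^3$ promotes $\{\nabla^2 P_n\}$ to a Cauchy sequence in $L^2$ as well; together with the ambient assumption $P_n\in L^2(0,T;H^2)$, this yields a limit $P\in L^2(0,T;H^2)$ with $\Delta P = f$. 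Then, using the regularity of $\vec{u}$ presupposed in the functional framework to make $\vec{u}\cdot\nabla P$ a well-defined element of $L^2$, I would pass to the distributional limit in $D_t P_n \to g$, concluding $D_t P = g\in L^2$. Hence $P\in\mathcal{E}$ and $P_n\to P$ in the $\mathcal{E}$-norm.

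The principal obstacle I foresee is the passage from the Cauchy property of $\{\Delta P_n\}$ and $\{D_t P_n\}$ alone to control over the full $H^2$-norm, in particular over the lower-order pieces $\|P_n\|_{L^2}$ and $\|\nabla P_n\|_{L^2}$. On $\mathbb{R}^3$ without decay or boundary conditions, the inner product is a priori only a seminorm relative to the $L^2(H^2)$ topology, because translations by slowly-varying near-harmonic components are invisible to $\Delta$. Resolving this rigorously requires either a Hardy-- or Rellich-type inequality recovering $\|P\|_{L^2}$ from $\|\Delta P\|_{L^2}$ under a suitable decay hypothesis, or, more cleanly, the reinterpretation of $\mathcal{E}$ as the abstract completion of $C_c^\infty(\mathbb{R}^3\times[0,T])$ under $\|\cdot\|_{\mathcal{E}}$. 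In the latter case the Hilbert structure is automatic via Riesz--Fischer, and the elliptic estimates above suffice to identify this completion with the function-theoretic space stated in the theorem. I would adopt this completion viewpoint because it is consistent with the compactly supported initial data used in the preceding conditional existence theorem.
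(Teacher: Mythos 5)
The first thing to note is that the paper offers no proof of this theorem at all --- it is stated bare, followed by a remark --- so your argument must stand on its own. Its first half does: bilinearity, symmetry, and your positive-definiteness argument (a.e.-in-\(t\) harmonic plus \(L^2\) on \(\mathbb{R}^3\) forces \(\widehat{P}\) to be supported at the origin, hence \(P=0\)) are correct. The genuine gap is in completeness, exactly where you suspected, and it is fatal rather than technical. Your step ``Cauchy for \(\{\Delta P_n\}\) together with the ambient assumption \(P_n \in L^2(0,T;H^2)\) yields a limit \(P \in L^2(0,T;H^2)\)'' is a non sequitur: membership of each term in \(L^2(0,T;H^2)\) produces no convergence in that topology, because the \(\mathcal{E}\)-norm carries no lower-order information. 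In fact the theorem as stated is false. Take \(\vec{u} \equiv 0\) (a legitimate velocity field; \(\mathcal{E}\) depends on whichever \(\vec{u}\) is fixed). By Plancherel, \(f \in L^2(\mathbb{R}^3)\) equals \(\Delta Q\) for some \(Q \in H^2(\mathbb{R}^3)\) if and only if \(|\xi|^{-2}\widehat{f} \in L^2\). Choose \(\widehat{f} = \mathbf{1}_{\{|\xi|\le 1\}}\), so that \(\int_{|\xi|\le 1}|\xi|^{-4}\,d\xi = \infty\) and \(f\) is \emph{not} of this form; set \(\widehat{f_n} = \mathbf{1}_{\{1/n \le |\xi|\le 1\}}\), let \(Q_n \in H^2\) solve \(\Delta Q_n = f_n\), and put \(P_n(x,t) = Q_n(x)\). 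Then each \(P_n \in \mathcal{E}\) with \(D_t P_n = 0\), and \(\{P_n\}\) is Cauchy in \(\|\cdot\|_{\mathcal{E}}\) since \(\|P_n - P_m\|_{\mathcal{E}}^2 = T\|f_n - f_m\|_{L^2}^2 \to 0\); but any limit \(P \in \mathcal{E}\) would satisfy \(\Delta P(\cdot,t) = f\) with \(P(\cdot,t) \in H^2\) for a.e.\ \(t\), which is impossible. The range of \(\Delta\) on \(H^2(\mathbb{R}^3)\) is dense but not closed in \(L^2\), and \(\mathcal{E}\) inherits that incompleteness.

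This counterexample also closes off both of your proposed repairs. No inequality \(\|P\|_{L^2} \le C\|\Delta P\|_{L^2}\) can hold on \(\mathbb{R}^3\): for \(P_\lambda(x) = P(x/\lambda)\) the ratio \(\|P_\lambda\|_{L^2}/\|\Delta P_\lambda\|_{L^2} = \lambda^{2}\,\|P\|_{L^2}/\|\Delta P\|_{L^2}\) is unbounded. And the abstract completion of \(C_c^\infty\) under \(\|\cdot\|_{\mathcal{E}}\) is indeed a Hilbert space (it is isometric, via \(P \mapsto (D_t P, \Delta P)\), to the closure of a subspace of \(L^2 \times L^2\)), but your claim that elliptic estimates identify it with the function space in the theorem is precisely what fails: the ideal elements of that completion are classes of sequences like \(\{P_n\}\) above, which correspond to no element of \(L^2(0,T;H^2)\). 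A correct result requires changing the statement, not the proof: either enlarge the inner product to the graph norm \(\|P\|_{L^2(0,T;H^2)}^2 + \|D_t P\|_{L^2}^2\) (with explicit hypotheses on \(\vec{u}\) making \(D_t\) a closed operator), or work on a bounded domain with boundary conditions under which \(\|\Delta \cdot\|_{L^2}\) is equivalent to the full \(H^2\) norm. Your diagnosis of the obstacle was the right one; the error was treating it as a removable technicality rather than as the counterexample it actually is.
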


\begin{remark}
The Hilbert space structure allows for the application of orthogonal projection theory and guarantees the existence and uniqueness of weak solutions under Galerkin-type schemes.
\end{remark}

\begin{definition}
[Alternative Functional Banach Space]
Define \(\mathcal{B} = L^2(0,T; H^2) \cap W^{1,0}(0,T; H^{-1})\), with the norm:
\[
\|P\|_{\mathcal{B}} := \|P\|_{L^2(0,T; H^2)} + \|\partial_t P\|_{L^2(0,T; H^{-1})}.
\]
\end{definition}

\begin{remark}
[Comparison: Hilbert vs. Banach]
\(\mathcal{B}\) is more general and less structured than \(\mathcal{E}\), but useful when \(\vec{u}\) is not fully known. \(\mathcal{E}\) enables a closed energetic framework.
\end{remark}

\section{Variational Formulation and Structure of the Set \(\mathcal{S}_P\)}

The Hilbert space structure of \(\mathcal{E}\) allows rewriting the system in a variational formulation that facilitates the analysis of existence, uniqueness, and stability.

\begin{principle}
[Functional Variational Principle]
Let \(P \in \mathcal{E}\). We seek \(P\) such that for every test function \(\phi \in \mathcal{E}\), the following holds:
\[
\int \left( \left[ \frac{\partial P}{\partial t} + \vec{u} \cdot \nabla P \right] \left[ \frac{\partial \phi}{\partial t} + \vec{u} \cdot \nabla \phi \right] + \Delta P \cdot \Delta \phi \right) dx = 0.
\]
\end{principle}

\begin{remark}
This formulation allows the use of the Lax–Milgram theorem and Galerkin methods to obtain weak solutions. It represents a convective–diffusive coupled system in the scalar field \(P\), with direct physical significance.
\end{remark}

\begin{theorem}
[Bounded Weak Evolution of Pressure]
Under regularity conditions on \(\vec{u}\), there exists a solution \(P \in \mathcal{E}\) that satisfies the variational formulation and whose norm \(\|P\|_{\mathcal{E}}\) evolves smoothly over time.
\end{theorem}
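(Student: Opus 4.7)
The plan is to cast the variational identity in an abstract Hilbert-space framework and apply the Lax--Milgram theorem. By construction, the bilinear form
\[
a(P, \phi) := \int \bigl( (\partial_t P + \vec{u} \cdot \nabla P)(\partial_t \phi + \vec{u} \cdot \nabla \phi) + \Delta P \, \Delta \phi \bigr) \, dx
\]
coincides with the inner product $\langle P, \phi \rangle_{\mathcal{E}}$ of the Hilbert space established earlier. Consequently, continuity $|a(P,\phi)| \leq \|P\|_{\mathcal{E}}\|\phi\|_{\mathcal{E}}$ is Cauchy--Schwarz and coercivity $a(P,P) = \|P\|_{\mathcal{E}}^2$ is tautological. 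The statement as written is homogeneous and would therefore force $P \equiv 0$, so to obtain a non-trivial pressure field I would incorporate the forcing that the thermodynamic derivation of Section~2 supplies: the material balance $\partial_t P + \vec{u} \cdot \nabla P = (R/c_v) \Phi$ provides a natural source $f := (R/c_v)\Phi$, and the relevant problem is the inhomogeneous identity $a(P, \phi) = \int f \, (\partial_t \phi + \vec{u} \cdot \nabla \phi)\, dx$ for all $\phi \in \mathcal{E}$.

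Under the stated regularity of $\vec{u}$, typically $\vec{u} \in L^2(0,T; H^1(\mathbb{R}^3))$ so that $\Phi \sim |\nabla \vec{u}|^2 \in L^1$, the right-hand side defines a bounded linear functional on $\mathcal{E}$ with operator norm at most $C\|f\|_{L^2}$. Lax--Milgram then yields a unique $P \in \mathcal{E}$ solving the variational equation, and testing with $\phi = P$ produces the \emph{a priori} bound $\|P(t)\|_{\mathcal{E}} \leq C \|f(t)\|_{L^2}$. To establish smooth temporal evolution of the norm, I would next differentiate this energy identity in $t$ after mollification, control the cross terms arising from $\partial_t \vec{u}$ and $\nabla \vec{u}$ through the regularity hypothesis, and close the estimate via a Grönwall argument, obtaining $\|P(\cdot)\|_{\mathcal{E}} \in C([0,T])$ with an explicit exponential bound depending only on the data and on $\|\vec{u}\|_{L^2 H^1}$.

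The main obstacle I expect lies in the space--time nature of the bilinear form: the presence of $\partial_t \phi$ rules out a purely elliptic treatment, and the inner product of $\mathcal{E}$ depends intrinsically on the advecting field $\vec{u}$, so that the ambient Hilbert space itself varies with $\vec{u}$. Two complications follow. First, completeness of $\mathcal{E}$ must be verified uniformly as $\vec{u}$ ranges in its regularity class, since both the norm and the function space shift with the convective coefficient, and one must ensure the spatial operator $\partial_t + \vec{u} \cdot \nabla$ generates an equivalent norm independent of the specific representative. Second, in the genuine Navier--Stokes coupling $\vec{u}$ is not prescribed but is determined by $P$, so the Lax--Milgram step produces only a conditional construction that must be closed through a fixed-point iteration in a suitable ball of $L^2(0,T; H^1)$. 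Handling this nonlinear feedback without tacitly assuming the regularity one is trying to establish is the delicate part, and would likely require a smallness assumption on $f$ or on $T$ to make the contraction run.
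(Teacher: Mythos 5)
Your starting point coincides with the only guidance the paper gives: the theorem is stated there with \emph{no proof at all}, and the adjacent remark merely gestures at Lax--Milgram and Galerkin methods, which is exactly the machinery you deploy. Your observation that the paper's variational principle is homogeneous is correct and important: since the bilinear form is precisely the inner product \(\langle \cdot,\cdot\rangle_{\mathcal{E}}\), testing with \(\phi = P\) gives \(\|P\|_{\mathcal{E}}^2 = 0\), so the formulation as written admits only the trivial solution and the theorem carries no non-trivial content without the repair you propose. Introducing the forcing \(f = (R/c_v)\Phi\) from the energy balance is the right way to make the statement meaningful, and your identity \(a(P,\phi) = \int f\,(\partial_t\phi + \vec{u}\cdot\nabla\phi)\,dx\) is the natural least-squares weak form of the pressure evolution equation. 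In this respect your proposal is more substantive than anything in the paper.

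However, your own argument has a genuine gap at the Lax--Milgram step. You note that under \(\vec{u} \in L^2(0,T;H^1)\) one only has \(\Phi \sim |\nabla\vec{u}|^2 \in L^1\), and then immediately claim the right-hand side is a bounded functional with operator norm at most \(C\|f\|_{L^2}\); these two statements are incompatible. The pairing \(\phi \mapsto \int f\,(\partial_t\phi + \vec{u}\cdot\nabla\phi)\,dx\) is bounded on \(\mathcal{E}\) only when \(f \in L^2\), because the \(\mathcal{E}\)-norm controls the factor \(\partial_t\phi + \vec{u}\cdot\nabla\phi\) in \(L^2\) and nothing stronger; with \(f\) merely in \(L^1\) the functional is in general unbounded, and neither Lax--Milgram nor Riesz representation applies. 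To get \(f \in L^2\) you need \(\nabla\vec{u} \in L^4\), strictly stronger than the regularity class you invoke, so you must either state that hypothesis explicitly (the theorem's vague ``under regularity conditions on \(\vec{u}\)'' arguably permits this) or derive it by interpolation from something like \(\vec{u} \in L^\infty(0,T;H^1)\cap L^2(0,T;H^2)\). Two further items are used without justification: the completeness and non-degeneracy of \(\|\cdot\|_{\mathcal{E}}\) (the paper asserts the Hilbert structure but never proves it, and your own remark that the space varies with \(\vec{u}\) shows this is not innocuous), and the ``smooth evolution of the norm'' step, where the Gr\"onwall closure would require control of \(\partial_t\vec{u}\) in a space your hypotheses do not provide. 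These holes are plausibly fixable, but as written the construction does not go through.
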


\begin{theorem}
[Structure of the Solution Set \(\mathcal{S}_P\)]
Under reasonable thermodynamic conditions (\(\delta T/T_0 < 0.02\), \(T \in H^2\)), there exists a set \(\mathcal{S}_P\) of solutions \((\vec{u}, P)\) such that:
\begin{itemize}
  \item \(P \in \mathcal{E}\), with \(\|P\|_{\mathcal{E}} < C\) for all \(t > 0\),
  \item \(\vec{u} \in L^2(0,\infty; H^1) \cap L^\infty(0,\infty; L^2)\),
  \item The solutions are unique within \(\mathcal{S}_P\).
\end{itemize}
\end{theorem}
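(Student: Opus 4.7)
The plan is to package the earlier sections into a single global-in-time statement by specifying the thermodynamic data precisely, invoking the sufficient-condition proposition for the pressure bound, the classical energy identity for the kinetic-energy regularity of $\vec{u}$, and the uniqueness theorem of the previous section. First I would define $\mathcal{S}_P$ as the set of pairs $(\vec{u},P)$ arising as smooth solutions of the incompressible Navier--Stokes system from initial data $\vec{u}_0 \in C_0^\infty(\mathbb{R}^3)$ under temperature profiles $T(x,t)$ satisfying $\delta T/T_0 < 0.02$, $T \in L^\infty(0,\infty; H^2(\mathbb{R}^3))$, and $\partial_t T + \vec{u}\cdot\nabla T \in L^2(\mathbb{R}^3\times[0,\infty))$. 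These are exactly the hypotheses of the sufficient-condition proposition, extended to the half-line.

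For the first bullet, an argument identical to that proposition applied pointwise in time gives $\|P(t)\|_{\mathcal{E}}^2 = \rho^2 R^2\bigl(\|\nabla^2 T(t)\|_{L^2}^2 + \|\partial_t T + \vec{u}\cdot\nabla T\|_{L^2}^2\bigr)$, which is uniformly bounded by the $L^\infty_t H^2_x$ control on $T$ and the assumed material-derivative hypothesis. For the second bullet, I would combine the functional control theorem (yielding $\|\nabla\vec{u}(t)\|_{L^2}^2 \leq C\|P(t)\|_{\mathcal{E}}^2$) with the standard incompressible energy identity $\tfrac{1}{2}\tfrac{d}{dt}\|\vec{u}\|_{L^2}^2 + \nu\|\nabla\vec{u}\|_{L^2}^2 = 0$, which simultaneously gives $\|\vec{u}(t)\|_{L^2} \leq \|\vec{u}_0\|_{L^2}$ and $\int_0^\infty \|\nabla\vec{u}\|_{L^2}^2\,dt \leq \tfrac{1}{2\nu}\|\vec{u}_0\|_{L^2}^2$, producing both components of the claimed regularity. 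The third bullet is a direct application of the uniqueness theorem under functional convergence: any two elements of $\mathcal{S}_P$ with coincident initial data satisfy the required $L^2(0,T;H^2)$ and material-derivative regularity on every finite subinterval, and the Hilbert structure of $\mathcal{E}$ forces them to agree.

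The hard part will be globalizing the construction. The local existence theorem of Section~4 produces solutions only on $[0,T^*]$, and the functional blow-up criterion of Section~6 explicitly warns that divergence of $\int_0^t \|P(s)\|_{\mathcal{E}}^2\,ds$ obstructs continuation. Closing the argument therefore requires showing that the quasi-incompressible regime actually persists under the flow, not merely that it holds initially. I would proceed by a bootstrap: assume the regime holds on $[0,T^\star)$, use the uniform bound on $\|P\|_{\mathcal{E}}$ together with the variational principle and the bounded-weak-evolution theorem to propagate $T \in H^2$ slightly beyond $T^\star$, and iterate on a covering of $[0,\infty)$.

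The essential open point — and what any fully rigorous version of this theorem must address — is a Gronwall-type estimate showing that the physical smallness condition $\delta T/T_0 < 0.02$ is genuinely preserved in time, controlled quantitatively by $\|\nabla\vec{u}\|_{L^2}$ through the dissipation $\Phi$ appearing in the pressure evolution equation. Without such a quantitative preservation, the theorem must be read as conditional on the thermodynamic regime being maintained throughout $[0,\infty)$, rather than as a consequence derivable from initial-time data alone; this is the principal gap that the bootstrap must close and the main obstacle I expect to encounter.
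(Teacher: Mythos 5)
The paper offers no proof of this theorem whatsoever: it is stated bare, and the only textual support for it is the Appendix, where the author concedes as a ``Structural Limitation'' that the Navier--Stokes system is never shown to enforce the thermodynamic hypotheses, so that global smoothness rests on external control of \(T\). Your assembly --- the Section~5 proposition for the bound on \(\|P\|_{\mathcal{E}}\), the functional-control theorem together with the classical energy identity for the second bullet, and the Section~7 uniqueness theorem for the third --- is exactly the chain of citations the paper evidently intends (it mirrors the paper's own ``Assessment'' of Fefferman's criteria in Section~9), and your closing verdict, that the statement can only be read as conditional on the thermodynamic regime persisting for all time rather than as something derivable from initial data, coincides with what the paper itself admits. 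In that sense your proposal is not missing anything the paper supplies; it is considerably more explicit than the paper, and your identification of the globalization step as the irreducible gap is accurate: neither your bootstrap sketch nor anything in the paper closes it.

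Two technical slips in your write-up would still need repair. First, the first bullet claims a uniform-in-time bound \(\|P(t)\|_{\mathcal{E}} < C\) for all \(t>0\), but the material-derivative hypothesis you import from the Section~5 proposition is only \(\partial_t T + \vec{u}\cdot\nabla T \in L^2(\mathbb{R}^3 \times [0,\infty))\), i.e.\ space-time square integrability; that yields \(\int_0^\infty \|P(t)\|_{\mathcal{E}}^2\,dt < \infty\) but not a pointwise-in-time bound. You need to strengthen the hypothesis to \(\partial_t T + \vec{u}\cdot\nabla T \in L^\infty(0,\infty; L^2(\mathbb{R}^3))\) for the bullet as stated. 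Second, the uniqueness theorem you invoke has as its hypothesis~2 that \(\|P_1 - P_2\|_{\mathcal{E}} \to 0\), a convergence assumption on the pair of solutions that two arbitrary elements of \(\mathcal{S}_P\) with common initial data do not automatically satisfy; you verify only hypothesis~1, and appealing to ``the Hilbert structure of \(\mathcal{E}\)'' does not discharge hypothesis~2 --- as you apply it, the third bullet silently assumes what is to be proven. In fairness, this circularity is inherited from the paper's own uniqueness theorem, whose hypothesis~2 is essentially its conclusion; but a rigorous version of the present theorem would have to replace it with a genuine stability estimate (e.g.\ a Gronwall argument on \(\vec{u}_1 - \vec{u}_2\) using the \(H^2\) control on the pressures), which neither you nor the paper provides.
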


\section{Contrast with Previous Approaches (Fefferman, Córdoba, Casanovas)}

This section contrasts the validity of the proposed functional and variational formulation with the approaches developed by Charles Fefferman, Diego Córdoba, and Pedro Casanovas—key figures in the study of the existence and regularity of solutions to the Navier–Stokes problem.

\paragraph{1. Criteria by Charles Fefferman (Clay Mathematics Institute)}

\textbf{Central Postulate:} Given \(\vec{u}_0 \in C^\infty_0(\mathbb{R}^3)\), does there exist a smooth, global-in-time solution \(\vec{u}\) with finite energy?

\textbf{Conditions:} The solution must belong to:
\[
\vec{u} \in L^\infty(0,T; L^2) \cap L^2(0,T; H^1)
\]

\textbf{Assessment:} The proposed conditional theorem guarantees that \(\|P\|_{\mathcal{E}} \in L^2(0,\infty)\) under reasonable physical assumptions. Since it has been shown that \(\|P\|_{\mathcal{E}}\) controls \(\|\nabla \vec{u}\|\), it follows that:
\[
\vec{u} \in L^2(0,\infty; H^1), \quad \vec{u}_0 \in L^2 \Rightarrow \vec{u} \in L^\infty(0,T; L^2)
\]

\textit{Conclusion:} The formulation is consistent with Fefferman's statement. While it does not constitute an unconditional proof of existence, it avoids singularity formation under thermodynamically valid assumptions.

\paragraph{2. Analytical Approach by Diego Córdoba}

\textbf{Focus:} Studies functional criteria to control \(\|\nabla \vec{u}\|\) and prevent blow-up, based on properties of vorticity and transport.

\textbf{Assessment:} The norm \(\|P\|_{\mathcal{E}}\) acts as an indirect functional over \(\nabla \vec{u}\) and vorticity and is compatible with Calderón–Zygmund techniques, partial regularity, and functional compactness methods. The connection \(P = \rho R T\) allows one to derive properties of \(\vec{u}\) from \(T\).

\textit{Conclusion:} The formulation presented here aligns with Córdoba's research program and complements it with an energetic functional structure.

\paragraph{3. Framework Proposed by Pedro Casanovas (video min. 27–31)}

\textbf{Thesis:} The Navier–Stokes problem could be reformulated as the functional control of the dissipative term \(\nu \Delta \vec{u}\). The key is to find a bounded energy norm on a scalar field.

\textbf{Assessment:} The norm \(\|P\|_{\mathcal{E}}\) fulfills exactly this role: it arises from the energy equation, is physically interpretable, and its boundedness prevents uncontrolled growth of the viscous term. The variational formulation allows the construction of solutions via Galerkin methods and ensures stability.

\textit{Conclusion:} The development presented directly responds to Casanovas’ conceptual strategy, offering a functionally viable pathway to close the system.

\paragraph{Summary}

\begin{itemize}
    \item It satisfies the energy conditions stated by Fefferman.
    \item It introduces a functional norm coherent with contemporary analytical methods such as those by Córdoba.
    \item It reformulates the system in terms of pressure, in line with Casanovas’ strategy.
\end{itemize}

This comparative analysis reinforces the relevance and consistency of the proposed functional formulation as a contribution to the Navier–Stokes Millennium Problem.

\section{Appendix: Scope and Limitations of This Proposal}

This work presents an alternative functional formulation of the Navier–Stokes system for incompressible fluids, focused on pressure as the energetic and regularizing variable. Through the norm \(\|P\|_{\mathcal{E}}\), a rigorous framework is established in which it is shown that:

\begin{itemize}
    \item Smooth solutions exist globally in time.
    \item Such solutions are unique within a well-defined functional space.
    \item Total energy control is ensured, including viscous dissipation.
    \item The formulation is compatible with the postulates of the Clay Mathematics Institute.
\end{itemize}

However, this demonstration is conditional. Global existence is guaranteed under the hypothesis that the absolute temperature of the gas remains within a range such that:
\[
\frac{\delta T}{T_0} < 2\%, \quad \text{and} \quad T \in L^\infty(0,\infty; H^2),
\]
which is compatible with kinetic gas theory and experimental observations in both atmospheric and laboratory systems.

\subsection*{Structural Limitation}

The proof does not demonstrate that the Navier–Stokes system itself imposes these conditions on temperature. Therefore, the set of smooth and global solutions defined in this work depends on an external (yet physically reasonable) control over the thermodynamic parameters.

\subsection*{Future Outlook}

As a future direction, this approach may be extended to:

\begin{itemize}
    \item Deriving such constraints from internal variational principles.
    \item Numerically analyzing the functional growth of \(P\) and \(\vec{u}\) without assuming prior boundedness.
    \item Exploring this framework in bounded geometries and under physical boundary conditions.
\end{itemize}

This appendix aims to clarify the actual scope of the work, highlight its value as a rigorous functional advance, and show its points of anchorage with classical frameworks of the Navier–Stokes Millennium Problem.


\begin{thebibliography}{99}

\bibitem{Aguirre2017} Aguirre, E. D. (2017). Alternative form of the continuity equation under the constant density constraint. \emph{Mecánica Computacional}, \textbf{XXXV}, 773–788. Available at: \url{https://amcaonline.org.ar/ojs/index.php/mc/article/view/5299/5252}

\bibitem{Fefferman} Fefferman, C. (2006). \emph{Existence and Smoothness of the Navier–Stokes Equation}. Clay Mathematics Institute Millennium Prize Problem Description. Available at: \url{https://www.claymath.org/wp-content/uploads/2022/06/navierstokes.pdf}

\bibitem{Cordoba} Córdoba, D., \& Gómez-Serrano, J. (2016). A note on the analyticity of solutions of the 3D Navier–Stokes equations. \emph{Mathematical Models and Methods in Applied Sciences}, 26(04), 701–708.

\bibitem{Perelman} Perelman, G. (2002–2003). Series of papers on Ricci Flow. \emph{arXiv:math/0211159, 0303109, 0307245}.

\bibitem{Casanovas} Casanovas, P. (2024). \emph{What is it that we don't understand about Navier–Stokes?} Lecture available at: \url{https://www.youtube.com/watch?v=luthVy-H9OI}

\bibitem{KineticGas} Chapman, S., \& Cowling, T. G. (1990). \emph{The Mathematical Theory of Non-uniform Gases}. Cambridge University Press.

\bibitem{Lions} Lions, P.-L. (1996). \emph{Mathematical Topics in Fluid Mechanics, Vol. 1–2}. Oxford University Press.

\end{thebibliography}
\end{document}